\theoremstyle{definition}
\newtheorem{definition}{Definition}[section]
\theoremstyle{plain}
\newtheorem{lemma}[definition]{Lemma}
\newtheorem{theorem}[definition]{Theorem}
\newtheorem{rmk}[definition]{Remark}
\newcommand{\el}{\ell}
\newcommand{\R}{\mathbb{R}}
\newcommand{\N}{\mathbb{N}}
\renewcommand{\subset}{\subseteq}
\newcommand{\G}{\mathbb{G}}
\author[Don]{Sebastiano Don}
\author[Vittone]{Davide Vittone}
\address[Don and Vittone]{Dipartimento di Matematica, via Trieste 63, 35121 Padova, Italy.}
\email{sebastiano.don@math.unipd.it,vittone@math.unipd.it}
\thanks{The authors are supported by the University of Padova, through Project Networking and STARS Project ``Sub-Riemannian Geometry and Geometric Measure Theory Issues: Old and New'' (SUGGESTION), and by GNAMPA of INdAM (Italy), through project ``Campi vettoriali, superfici e perimetri in geometrie singolari''. }
\subjclass[2010]{46E35, 26B30, 26D10, 53C17}
\keywords{Functions with bounded variation, metric spaces, compactness theorems, Carnot-Carath\'eodory spaces}
\begin{document}

\title{A compactness result for BV functions in metric spaces}

\begin{abstract}
We prove a compactness result for bounded sequences $(u_j)_j$ of functions with bounded variation in metric spaces $(X,d_j)$ where the space $X$ is fixed but the metric may vary with $j$. We also provide an application to  Carnot-Carath\'eodory spaces.
\end{abstract}

\maketitle\vspace{-1mm}

\section{Introduction}
One of the milestones in the theory of functions with bounded variation (BV) is the following Rellich-Kondrachov-type theorem: given a bounded open set $\Omega\subset\R^n$ with Lipschitz regular boundary, the  space $BV(\Omega)$ of functions with bounded variation in $\Omega$ compactly embeds in $L^q(\Omega)$ for any $q\in[1,\frac{n}{n-1}[$. One notable consequence  is the following property: if $(u_j)_j$ is a  sequence of functions in $BV_{loc}(\R^n)$ that are locally uniformly bounded in $BV$, then for any $q\in[1,\frac{n}{n-1}[$ a subsequence $(u_{j_h})_h$ converges in  $L^q_{loc}(\R^n)$.

Sobolev and BV functions in metric measure spaces have recently received a great deal of attention; to this regard we only mention the celebrated paper \cite{hajkos}, where the authors show how the validity of Poincar\'e-type inequalities and a doubling property of the reference measure are enough to prove fundamental properties  like Sobolev inequalities, Sobolev embeddings, Trudinger inequality, etc. We also point out a Rellich-Kondrachov-type result \cite[Theorem 8.1]{hajkos}: if a sequence $(u_j)_j$ is bounded in some $W^{1,p}$, then a subsequence converges in some $L^q$. 

In this paper we study similar compactness properties for sequences $(u_j)_j$ of locally uniformly bounded BV functions in  metric measure spaces $(X,\lambda,d_j)$ where the underlying measure space $(X,\lambda)$ is fixed but the metric $d_j$ varies with $j$. In our main result we prove that, if $d_j$ converges locally uniformly to some distance $d$ on $X$ such that $(X,\lambda,d)$ is a (locally) doubling separable metric measure space, and if the functions $u_j:X\to \R$ are locally uniformly (in $j$) bounded with respect to a BV-type norm in $(X,d_j)$ and satisfy some local Poincar\'e inequality (with constant independent of $j$), then a subsequence of $u_j$ converges in some $L^q_{loc}(X,\lambda)$. See Theorem \ref{structureconvergence} for a precise statement. To our knowledge, the strategy we adopt to prove Theorem \ref{structureconvergence} is novel even when the metric on $X$ is not varying  (i.e., when $d_j=d$ for any $j$); in particular,  we are able to provide a different proof of the case $p=1$ in \cite[Theorem 8.1]{hajkos} for separable metric spaces. 

The motivation that led us to Theorem \ref{structureconvergence} comes from an application   to the study of $BV$ functions in {\em Carnot-Carath\'eodory} (CC) spaces. In Theorem \ref{teo:applicazione} we indeed prove that, if $X^j=(X_1^j,\dots,X^j_m)$ are families of smooth vector fields in $\R^n$ that, as $j\to\infty$, converge in $C^\infty_{loc}(\R^n)$ to a family $X=(X_1,\dots,X_m)$ satisfying the Chow-H\"ormander condition, and if $u_j:\R^n\to\R$ are locally uniformly bounded in $BV_{X^j,loc}$, then a subsequence $u_{j_h}$ converges in $L^1_{loc}(\R^n)$ to some $u\in BV_{X,loc}(\R^n)$.  Theorem \ref{teo:applicazione} directly follows from Theorem \ref{structureconvergence} once we show that the CC distances induced by $X^j$ converge locally uniformly to the one induced by $X$, and that (locally) a Poincar\'e inequality holds for $BV_{X^j}$ functions with constant independent of $j$; these two results (Theorems \ref{teo:convunifd_j} and \ref{teo:poincare}, respectively) use in a crucial way some outcomes of the papers \cite{BBP,MM}.

Our interest in Theorem \ref{teo:applicazione}, in turn,  was originally motivated by  the study  of fine properties of $BV_X$ functions in CC spaces and, in particular, of their local properties.  Here, one often needs to perform a blow-up procedure around a fixed point $p$: it is well-known that this produces a sequence of CC metric spaces $(\R^n,X^j)$ that converges to (a quotient of) a {\em Carnot group}  structure $\G$. In this blow-up, the original $BV_X$ function $u_0$ gives rise to a sequence $(u_j)_j$ of functions in $BV_{X^j}$ which, up to a subsequence, will converge in $L^1_{loc}$ to a $BV_{\G,loc}$ function $u$ in $\G$. The function $ u$ (typically: a linear map, or a {\em jump map} taking two different values on complementary halfspaces of $\G$) will then provide some information on $u_0$ around $p$. We refer to \cite{DVfineproperties} for more details.

{\em Aknowledgements.} The authors are grateful to M.~Miranda Jr. and D.~Morbidelli for fruitful discussions.

\section{The main result}
This section is devoted to the statement and the proof of our main result. See e.g. \cite{MirandaBVgood} for a definition of BV functions in metric spaces.

\begin{theorem}\label{structureconvergence}
	Let $X$ be a set, $q\geq 1$, $\delta>0$ and let $d, d_j$ ($j\in \mathbb{N}$) be metrics on $X$ such that $(X,d)$ is locally compact and separable. Let $\lambda,\, \mu_j \,(j\in \mathbb{N})$ be Radon measures on $X$ and consider a sequence $(u_j)_j$ in $L^q_{loc}(X;\lambda)$. Suppose that the following assumptions hold.
\begin{itemize}
	\item[(i)] The sequence $(d_j)_j$ converges to $d$ in $L^\infty_{loc}(X\times X)$.
	\item[(ii)] $(X,d,\lambda)$ is a locally doubling metric measure space, i.e., for any compact set $K\subseteq X$ there exist $C_D\geq 1$ and $R_D>0$ such that
	\[
	\forall\ x\in K,\ \forall r\in (0,R_D) \qquad \lambda(B(x,2r))\leq C_D\lambda (B(x,r)).
	\]
	\item[(iii)] For every compact set $K\subseteq X$ there exist $C_P, R_P>0$ and $\alpha\geq 1$ such that 
	\[
	\forall x\in K,\ \forall j\in \mathbb{N},\ \forall r\in (0,R_P)\qquad \|u_j-u_j(B^j)\|_{L^q(B^j)}\leq C_P\,r^\delta\mu_j(\alpha B^j),
	\]
where $B^j:=B^j(x,r)$ denotes a ball in $(X,d_j)$, $\alpha B^j:=B^j(x,\alpha r)$ and $ u_j(B^j):=\fint_{B^j} u_j d\lambda$.
	\item[(iv)] For every compact set $K\subseteq X$ there exists $M_K>0$ such that
	\[
	\forall j\in \mathbb{N}\qquad \|u_j\|_{L^1(K;\lambda)}+\mu_j(K)\leq M_K.
	\]
\end{itemize}
	Then there exist $u\in L^q_{loc}(X;\lambda)$ and a subsequence $(u_{j_h})_h$ of $(u_j)_j$ such that $(u_{j_h})_h$ converges to $u$ in $L^q_{loc}(X;\lambda)$ as $h\to +\infty$. 
\end{theorem}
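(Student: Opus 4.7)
My plan is to approximate each $u_j$ by a piecewise-constant function built from averages over $d_j$-balls centered at a fixed $d$-net of $K$, to extract (via Bolzano--Weierstrass on finitely many scalars, followed by a Cantor diagonalization) a subsequence along which these approximants converge in $L^q_{loc}$, and to control the approximation error uniformly in $j$ through the Poincar\'e inequality (iii). Fix a compact set $K\subset X$; using (i) and local compactness of $(X,d)$, select a compact enlargement $K''\supset K$ and an $r_0>0$ so that $\alpha B^j(x,r)\subset K''$ for every $x\in K$, every $r\in(0,r_0)$, and all sufficiently large $j$. The doubling property (ii) together with the uniform convergence $d_j\to d$ provides a positive lower bound on $\lambda(B^j(x,r))$ that is uniform in $j$ and locally uniform in $x,r$. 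For each integer $k$ large enough that $2^{1-k}<r_0$, let $\{y_n\}_{n\in I_k}$ be a finite maximal $2^{-k}$-separated subset of $K$ in the metric $d$, and let $\{V_n^k\}_{n\in I_k}$ be the associated Voronoi partition of $K$ (with ties broken measurably). Define
\[
W_j^k(x):=u_j\bigl(B^j(y_n,2^{1-k})\bigr)\qquad\text{for }x\in V_n^k.
\]

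The core step is the uniform error bound $\|u_j-W_j^k\|_{L^q(K;\lambda)}\le C(K)\,2^{-k\delta}$, valid for every $k$ and all large $j$. Since $\{y_n\}_{n\in I_k}$ is $2^{-k}$-dense in $K$, the Voronoi cell $V_n^k$ lies inside $B(y_n,2^{-k})$ and, by (i), inside $B^j(y_n,2^{1-k})$ for $j$ large. Hence (iii) applied on $B^j(y_n,2^{1-k})$ gives
\[
\int_{V_n^k}|u_j-W_j^k|^q\,d\lambda\le C_P^q\,(2^{1-k})^{q\delta}\,\mu_j\bigl(\alpha B^j(y_n,2^{1-k})\bigr)^q.
\]
Using (iv), $\mu_j(\alpha B^j)\le M_{K''}$, so that $\mu_j(\alpha B^j)^q\le M_{K''}^{q-1}\,\mu_j(\alpha B^j)$; summing over $n\in I_k$ and invoking the bounded overlap of $\{\alpha B^j(y_n,2^{1-k})\}_{n\in I_k}$ (a consequence of (ii) and $d_j\to d$) yields the claimed bound. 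The individual averaged values $u_j(B^j(y_n,2^{1-k}))$ are, by (iv) and the lower bound on $\lambda(B^j(y_n,2^{1-k}))$, uniformly bounded in $j$ for each fixed $k$ and $n$.

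Since each $I_k$ is finite, Bolzano--Weierstrass and a Cantor diagonalization over $k$ and over an exhaustion $(K_m)_m$ of $X$ by compact sets (available because $(X,d)$ is separable and locally compact) produce a single subsequence $(u_{j_h})_h$ along which, for every $k$ and $m$, the piecewise-constant functions $W_{j_h}^k$ converge in $L^q(K_m;\lambda)$ to some $W^k$, by dominated convergence. The triangle inequality
\[
\|u_{j_h}-u_{j_\ell}\|_{L^q(K;\lambda)}\le 2C(K)\,2^{-k\delta}+\|W_{j_h}^k-W_{j_\ell}^k\|_{L^q(K;\lambda)},
\]
applied by first choosing $k$ large and then $h,\ell$ large, shows that $(u_{j_h})_h$ is Cauchy in $L^q(K;\lambda)$ and hence converges; the same subsequence then works on every compact subset of $X$, yielding $L^q_{loc}$ convergence. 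The main technical obstacle is the uniform error bound: naively raising (iii) to the $q$-th power and summing risks a constant growing with $\#I_k$, and the saving device is precisely the subadditivity trick $\mu_j(\alpha B^j)^q\le M_{K''}^{q-1}\mu_j(\alpha B^j)$, which, combined with the bounded overlap coming from doubling, keeps the right-hand side under control uniformly in $k$ and $j$.
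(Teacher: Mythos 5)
Your proof is correct, and it takes a genuinely different route from the paper's.

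The paper's proof runs as follows. Fixing a compact $K$ and $\varepsilon>0$, it applies the $5r$-covering theorem to obtain a family of \emph{pairwise disjoint} $d$-balls $\{B_\ell\}$ whose $5$-dilations cover $K$; the doubling property guarantees that a \emph{finite} subfamily of shrunken balls $\{\tfrac{1}{2^\beta}B_\ell\}_{\ell\le L}$ already covers a fixed positive fraction of $\lambda(K)$, giving a set $A_1\subseteq K$ with $\lambda(A_1)\ge c\,\lambda(K)$. On $A_1$ one extracts a subsequence (Bolzano--Weierstrass on the finitely many averages) along which the Cauchy defect $\limsup\|u_{\nu_1(h)}-u_{\nu_1(k)}\|_{L^q(A_1)}$ is at most $C_0\varepsilon$; crucially, since the $B_\ell$ are disjoint, the sum $\sum_\ell\mu_j(\tfrac12 B_\ell^j)\le\mu_j(\overline U_1)$ requires no overlap estimate, and one combines the pieces by the triangle inequality for $L^q$ norms over disjoint sets rather than by raising to the $q$-th power. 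The construction is then iterated on $K\setminus A_1$ with $\varepsilon/2$, and so on, Egorov-style, followed by a double diagonalization over $\varepsilon$ and an exhaustion of $X$.

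Your argument instead covers \emph{all of} $K$ at one scale $2^{-k}$ via a maximal $2^{-k}$-separated net and its Voronoi partition, and obtains a single clean approximation bound $\|u_j-W_j^k\|_{L^q(K)}\lesssim 2^{-k\delta}$ uniformly in large $j$, which makes the subsequent step (Bolzano--Weierstrass on finitely many averages per scale, Cantor diagonal over $k$ and over an exhaustion, and a $\tfrac{\varepsilon}{3}$-style triangle inequality) essentially automatic. The price you pay for covering all of $K$ at once with overlapping dilated balls is that you need the bounded-overlap property of $\{\alpha B^j(y_n,2^{1-k})\}_n$ (a consequence of the $2^{-k}$-separation of the net, doubling of $(X,d,\lambda)$, and the uniform convergence $d_j\to d$, with overlap constant independent of $k$ and $j$), together with the subadditivity trick $\mu_j(\alpha B^j)^q\le M_{K''}^{q-1}\mu_j(\alpha B^j)$ to sum the $q$-th powers without picking up a factor $\#I_k$. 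The paper's use of disjoint balls elegantly sidesteps both of these, at the cost of the recursive construction and only covering a fixed fraction of $K$ per round. Your version is more quantitative and modular --- it cleanly separates a one-shot approximation estimate from a soft compactness step --- and is closer in spirit to classical discretization proofs of Rellich--Kondrachov; the paper's version avoids overlap machinery entirely and uses a more measure-theoretic exhaustion. Both are correct; yours would require spelling out the overlap-constant computation (this uses doubling at comparable scales and needs $k$ large enough that the relevant radii are below $R_D$) and the measurability of the Voronoi tie-breaking, but no gap of substance remains.
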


Concerning the classical Euclidean case when $(X,d_j,\lambda)=(X,d,\lambda)=(\R^n,|\cdot|,\mathscr L^n)$, we invite the reader to compare the assumption in (iii) with the well-known Poincar\'e inequality
\[
\|u-u(B_r)\|_{L^q(B_r)} \leq C r^\delta |Du|(B_r)\qquad \forall\;q\in[1,\tfrac n{n-1}[\text{ with } \delta:=\tfrac nq +1-n>0 
\]
valid for  for any $BV$ function $u$ on any ball $B_r\subset\R^n$ of radius $r$ and where $u(B_r)$ denotes the mean value $\mathscr L^n(B_r)^{-1}\int_{B_r}u\,d\mathscr L^n$ of $u$ in $B_r$, $C>0$ is a geometric constant, and $|Du|$ denotes the total variation measure associated with $u$ (i.e., the total variation of the distributional derivatives of $u$).

\begin{proof}
	Let $K\subseteq X$ be a fixed compact set and let $\varepsilon>0$. We first prove that there exists a subsequence $(u_{j_h})_h$ such that
\begin{equation}\label{eq:primoclaim}
	\limsup_{h,k \to +\infty}\|u_{j_h}-u_{j_k}\|_{L^q(K;\lambda)}\leq 2 C_0 \varepsilon,
\end{equation}
for some $C_0>0$ depending on $K$ only.

	Consider an open set $U_1\subseteq X$ such that $K\subseteq U_1$, $\overline{U}_1$ is compact and 
\begin{equation}\label{eq1}
	\lambda(U_1\setminus K)\leq \frac{1}{4C_D^{\beta+3}}\lambda(K).
\end{equation}
where $\beta$ is an integer such that $2^\beta>2\alpha$ and $\alpha$ is given by condition (iii).
	By the $5r-$covering Theorem (see e.g. \cite[Theorem 1.2]{Heinonen}) we can find a family $\displaystyle\left\{B(x_\el,r_\el): \el\in \mathbb{N}\right\}$ of pairwise disjoint balls such that $x_\el\in K$, $0<r_\el<\min\{\varepsilon^{1/\delta}, R_D/4, 2\alpha R_P\}$, $\overline{B(x_\el,5r_\el)}\subseteq U_1$ and 
\[
	K\subseteq \bigcup_{\el=0}^\infty \overline{B(x_\el,5r_\el)}.
\]
	Denote for shortness $B_\el:=B(x_\el,r_\el)$; then
\[
\lambda(K)\leq \sum_{\el=0}^\infty \lambda(5\overline{B}_\el)\leq \sum_{\el=0}^\infty \lambda(8B_\el)\leq C_D^{\beta+3}\sum_{\el=0}^\infty \lambda(\tfrac{1}{2^\beta}B_\el)= C_D^{\beta+3}\lambda\left(\bigcup_{\el=0}^\infty \tfrac{1}{2^\beta}B_\el\right).
\]
	Hence we can choose $L\in \mathbb{N}$ such that
\[
	\lambda\left(\bigcup_{\el=0}^L\tfrac{1}{2^\beta} {B}_\el\right)\geq \frac{1}{2C_D^{\beta+3}}\lambda(K).
\]
	Taking into account \eqref{eq1} we easily get that $A_1:=K\cap \bigcup_{\el=0}^L\tfrac{1}{2^\beta}B_\el$ satisfies
\[
	\lambda(A_1)\geq\frac{1}{4C_D^{\beta+3}}\lambda(K).
\]
	For $j\in \mathbb N$ and $\el=1,\dots, L$ set for shortness $B_\el^j:=B^j(x_\el,r_\el)$. By assumption (i) there exists $J\in \mathbb{N}$ such that for every $j\geq J$, and for every $\el=0,\dots, L$
\begin{equation}\label{eq:contenimenti}
  \tfrac{1}{2^\beta}B_\el\subseteq \tfrac{1}{2\alpha}B_\el^j\qquad\text{and}\qquad
	\tfrac{1}{2}B_\el^j\subseteq B_\el.
\end{equation}
Hence for every $j\geq J$ one has
\[
\left|u_j\left(\tfrac{1}{2\alpha}B_\el^j\right)\right|
%&=\left|\frac{1}{\lambda\left(\tfrac{1}{2\alpha}B_\el^j\right)}\int_{\tfrac{1}{2\alpha}B_\el^j} %u_jd\lambda\right|\\
 \leq \lambda\left(\tfrac{1}{2\alpha}B_\el^j\right)^{-1}\|u_j\|_{L^1(U_1;\lambda)}\\
\leq M_{\overline{U_1}}\max\{\lambda\left(\tfrac{1}{2^\beta}B_\el\right)^{-1}:\el=0,\dots,L\} <+\infty.
\]
	By Bolzano-Weierstrass Theorem we get an increasing function $\nu_1:\mathbb{N}\rightarrow\mathbb{N}$ such that
\begin{equation}\label{eq:BWT}
\text{the sequence $\left(u_{\nu_1(j)}\left(\tfrac{1}{2\alpha}B^{\nu_1(j)}_\el\right)\right)_j$ is convergent for every $\el=0,\dots,L$.}
\end{equation}
Then
\begin{align}
	&\limsup_{h,k\to+\infty}\|u_{\nu_1(h)}-u_{\nu_1(k)}\|_{L^q(A_1;\lambda)}\nonumber\\
	\leq& \limsup_{h,k\to+\infty}\sum_{\ell=0}^L\left(\left\|u_{\nu_1(h)}-u_{\nu_1(h)}\left(\tfrac{1}{2\alpha}B_\el^{\nu_1(h)}\right)\right\|_{L^q\left(\tfrac{1}{2^\beta}B_\el;\lambda\right)}\right.\nonumber\\
	&\hphantom{\leq\limsup_{h,k\to+\infty}\sum}+\left\|u_{\nu_1(k)}-u_{\nu_1(k)}\left(\tfrac{1}{2\alpha}B_\el^{\nu_1(k)}\right)\right\|_{L^q\left(\tfrac{1}{2^\beta}B_\el;\lambda\right)}\nonumber\\
	&\hphantom{\leq\limsup_{h,k\to+\infty}\sum}\left.+\left\|u_{\nu_1(h)}\left(\tfrac{1}{2\alpha}B_\el^{\nu_1(h)}\right)-u_{\nu_1(k)}\left(\tfrac{1}{2\alpha}B_\el^{\nu_1(k)}\right)\right\|_{L^q\left(\tfrac{1}{2^\beta}B_\el;\lambda\right)}\right)\nonumber\\
	\intertext{and, using \eqref{eq:contenimenti} and \eqref{eq:BWT},}
	\leq &\limsup_{h,k\to +\infty}\sum_{\el=0}^L\left(\left\|u_{\nu_1(h)}-u_{\nu_1(h)}\left(\tfrac{1}{2\alpha}B_\el^{\nu_1(h)}\right)\right\|_{L^q\left(\tfrac{1}{2\alpha}B_\el^{\nu_1(h)};\lambda\right)}\right.\nonumber\\
	&\hphantom{\leq\limsup_{h,k\to+\infty}\sum}\left.+\left\|u_{\nu_1(k)}-u_{\nu_1(k)}\left(\tfrac{1}{2\alpha}B_\el^{\nu_1(k)}\right)\right\|_{L^q\left(\tfrac{1}{2\alpha}B_\el^{\nu_1(k)};\lambda\right)}\right)\nonumber\\
	\leq&\limsup_{h,k\to+\infty} \sum_{\el=0}^L\frac{C_P\,r_\el^{\delta}}{(2\alpha)^{\delta}}\left(\mu_{\nu_1(h)}\left(\tfrac{1}{2}B^{\nu_1(h)}_\el\right)+\mu_{\nu_1(k)}\left(\tfrac{1}{2}B^{\nu_1(k)}_\el\right)\right)\nonumber\\
	\leq&\limsup_{h,k\to+\infty}\frac{C_P\,\varepsilon}{(2\alpha)^{\delta}}\left(\mu_{\nu_1(h)}\left(\overline{U}_1\right)+\mu_{\nu_1(k)}\left(\overline{U}_1\right)\right)\leq C_0\varepsilon,\nonumber
	\end{align}
where $C_0$ depends only on $U_1$ and thus only on $K$.\newline
	We  proved that there exist $A_1\subseteq K$ and a subsequence $(u_{\nu_1(h)})_h$ of $(u_j)_j$ such that
\[
\begin{aligned}
	&\lambda(K\setminus A_1)\leq \left(1-\frac{1}{4C_D^{\beta+3}}\right)\lambda(K),\\
	&\limsup_{h,k\to+\infty}\|u_{\nu(h)}-u_{\nu(k)}\|_{L^q\left(A_1;\lambda\right)}\leq C_0 \varepsilon.
\end{aligned}
\]
	Since the set $K_2=K\setminus A_1$ is compact we can repeat the same argument  on $K_2$, with $\tfrac{\varepsilon}{2}$ in place of $\varepsilon$, and paying attention to choose an open set $U_2\subseteq U_1$ so that $C_0$ can be left unchanged. By a recursive argument, for every $j\in \mathbb{N}$ we get pairwise disjoint sets $A_j\subseteq K$ and subsequences $(u_{\nu_j(h)})_h$  such that for every $j\geq 1$
\begin{itemize}
	\item[(a)] $(u_{\nu_{j+1}(h)})_h$ is a subsequence of $(u_{\nu_j(h)})_h$;
	\item[(b)] 
	$\lambda\left(K\setminus \bigcup_{i=1}^jA_i\right)\leq \left(1-\frac{1}{4C_D^{\beta+3}}\right)^j\lambda(K)$;
	\item[(c)] $\displaystyle \limsup_{h,k\to+\infty} \|u_{\nu_j(h)}-u_{\nu_j(k)}\|_{L^q\left(A_j;\lambda\right)}\leq C_02^{1-j}\varepsilon$.
\end{itemize}
	Inequality (b) immediately implies that $\lambda\left(K\setminus\bigcup_{i=1}^\infty A_i\right)=0$. Working on the diagonal subsequence $(u_{\nu_h(h)})_h$ we can conclude that
\begin{equation}\label{limsup}
	\begin{aligned}
	\limsup_{h,k\to+\infty}\|u_{\nu_h(h)}-u_{\nu_k(k)}\|_{L^q\left(K;\lambda\right)}&=\limsup_{h,k\to+\infty}\|u_{\nu_h(h)}-u_{\nu_k(k)}\|_{L^q\left(\bigcup_{i=1}^\infty A_i;\lambda\right)}\\
	&\leq \sum_{i=1}^\infty \limsup_{h,k\to+\infty}\|u_{\nu_h(h)}-u_{\nu_k(k)}\|_{L^q\left(A_i;\lambda\right)}\leq 2 C_0\varepsilon.
\end{aligned}
\end{equation}
This proves \eqref{eq:primoclaim}.

	Let us denote for simplicity $(u_h)_h$ instead of $(u_{\nu_h(h)})_h$. We now prove that for every compact set $K\subseteq X$ there exists a subsequence $(u_{j_h})_h$ of $(u_h)_h$ such that 
\begin{equation}\label{diagonalcompact}
	\lim_{h,k\to+\infty}\|u_{j_h}-u_{j_k}\|_{L^q\left(K;\lambda\right)}=0.
\end{equation}
	By \eqref{limsup}, for every $i\in \mathbb N$, we can  recursively build a subsequence $(u_{\nu_{i+1}(h)})_h$ of $(u_{\nu_i(h)})_h$ such that
	\[
	\limsup_{h,k \to +\infty}\|u_{\nu_i(h)}-u_{\nu_i(k)}\|_{L^q(K;\lambda)}\leq \tfrac{2}{i+1}C_0.
	\]
	Then the diagonal sequence $(u_{\nu_h(h)})$ satisfies \eqref{diagonalcompact}.
	
Eventually, take a sequence $(K_j)$ of compact sets such that $K_j\subseteq \mathrm{int}(K_{j+1})$ and $\bigcup_{j\in \mathbb N}K_j=X$. By \eqref{diagonalcompact}, for every $i\in \mathbb{N}$ we can recursively build a subsequence $(u_{\nu_i(h)})_h$  such that $(u_{\nu_{i+1}(h)})_h$ is a subsequence of $(u_{\nu_i(h)})_h$ and 
	\[
	\lim_{h,k\to+\infty}\|u_{\nu_i(h)}-u_{\nu_i(k)}\|_{L^q\left(K_i;\lambda\right)}=0.
	\]
The diagonal subsequence $(u_{\nu_h(h)})_h$ will then  converge to some $u$ in $L^q_{loc}(X;\lambda)$. This concludes the proof.
\end{proof}

\begin{rmk}{\rm 
The careful reader will easily notice that Theorem \ref{structureconvergence} holds also when assumption (iii) is replaced by the following weaker one:
\begin{itemize}
	\item[(iii')] For every compact set $K\subseteq X$ there exist $R_P>0,\alpha\geq 1$ and  $f:(0,+\infty)\:\to(0,+\infty)$ such that  $\lim_{r\to 0^+}f(r)=0$ and
	\[
	\forall x\in K,\ \forall j\in \mathbb{N},\ \forall r\in (0,R_P)\qquad \|u_j-u_j(B^j)\|_{L^q(B^j)}\leq f(r)\,\mu_j(\alpha B^j).
	\]
\end{itemize}
}\end{rmk}

\section{An application to Carnot-Carath\'eodory spaces}
	Let $\Omega$ be an open set in $\mathbb{R}^n$ and let $X=(X_1,\dots,X_m)$ be an $m$-tuple  of smooth and  linearly independent vector fields on $\mathbb{R}^n$, with $2\leq m\leq n$. We say that an absolutely continuous curve $\gamma:[0,T]\rightarrow\mathbb{R}^n$ (briefly denoted by $\gamma\in AC([0,T];\mathbb{R}^n)$) is an $X$-subunit path joining $x$ and $y$ in $\mathbb{R}^n$ if $\gamma(0)=x$, $\gamma(T)=y$ and there exist $h_1,\dots,h_m:[0,T]\rightarrow\mathbb R$ with $\sum_{j=1}^m h_j^2\leq 1$ such that
	\begin{equation}\label{eq:orizzontalecontrollih}
		 \dot{\gamma}(t)=\sum_{j=1}^m h_j(t)X_j(\gamma(t))\qquad\text{for a.e. }t\in[0,T].
	\end{equation}
	Moreover, for every $x,y\in \mathbb R^n$ we define the quantity
\begin{equation}\label{eq:defdcc}
	d(x,y):=\inf\big\{T\in(0,+\infty): \exists \gamma\in AC([0,T];\mathbb{R}^n) \text{ $X$-subunit joining } x \text{ and } y\big\},
\end{equation}
where we agree that $\inf\emptyset=+\infty$. 

We will suppose in the following that the Chow-H\"ormander condition holds, i.e., that for every $x\in \mathbb{R}^n$  the vector space spanned by $X_1,\dots,X_m$ and their commutators of any order computed at $x$ is the whole $\mathbb{R}^n$. By the Chow-Rashevsky Theorem, if the Chow-H\"ormander condition holds, the function $d$ defined above is  a distance and the couple $(\mathbb R^n, X)$ (or equivalently $(\mathbb R^n, d)$) is called {\em Carnot-Carath\'eodory space} (CC space for short). 	It is well known that $d$ and the Euclidean distance $d_e$ induce on $\mathbb R^n$ the same topology (see \cite{NSW}).
	
We  denote  balls induced by $d$ by $B(x,r)$ and  Euclidean balls by $B_e(x,r)$. As customary in the literature, in what follows we also suppose that the metric balls $B(x,r)$ are bounded with respect to the Euclidean metric. One consequence of this assumption is the existence of geodesics, i.e., for any $x,y\in\R^n$ the infimum in \eqref{eq:defdcc} (as well as the one in \eqref{eq:defdcc2} below) is indeed a minimum; see e.g. \cite[Theorem 1.4.4]{Monti}.

For $j\in\mathbb N$ let $X^j=(X_1^j,\dots,X_m^j)$ be a family of linearly independent vector fields such that, for every fixed $i=1,\dots, m$,  $X_i^j$ converges to $X_i$ in $C^\infty_{loc}(\mathbb R^n)$ as $j\to\infty$. We denote by $d_j$, $j\in \mathbb N$, the CC distance associated with $X^j$. 	If $h\in L^\infty([0,T];\mathbb{R}^m)$ with $\|h\|\leq 1$, $T>0$ and $x\in \mathbb{R}^n$, it is convenient to define $\gamma_{h,x},\gamma^j_{h,x}:[0,T]\rightarrow\mathbb{R}^n$ as the AC curves such that $\gamma_{h,x}(0)=\gamma^j_{h,x}(0)=x$ and for almost every $t\in [0,T]$
	\[
	\dot\gamma_{h,x}(t)=\sum_{i=1}^m h_i(t)X_i(\gamma_{h,x}(t)),\qquad \dot\gamma^j_{h,x}(t)=\sum_{i=1}^m h_i(t)X_i^j(\gamma^j_{h,x}(t)).
	\]
With this notation, an equivalent definition of the CC distance is
\begin{equation}\label{eq:defdcc2}
d(x,y)=\inf\{\|h\|_{L^\infty(0,1)}:h\in L^\infty([0,1];\mathbb{R}^m)\text{ and }\gamma_{h,x}(1)=y\}.
\end{equation}
The boundedness of metric balls implies that,  for every $T>0$ and $h\in L^\infty([0,T];\R^m)$, the curve $\gamma_{h,x}$ is well-defined on $[0,T]$. 
	
It can be easily seen that, if the Chow-H\"ormander condition holds, then for every compact set $K\subset\R^n$ there exists an integer $s(K)$ such that the following holds: for any $x\in K$, $X_1,\dots,X_m$ and their commutators up to order $s(K)$ computed at $x$ span the whole $\R^n$. The following  theorem gives a sort of quantitative version of some of the celebrated results of \cite{NSW}. The proof of Theorem \ref{uniformity} follows fairly easily  from  \cite{BBP,MM} (see in particular \cite[Proposition 5.8 and Claim 3.3]{BBP}) and from the following observation: for any compact set $K\subset\R^n$ there exists $J\in\N$ such that, for any $x\in K$ and $j\geq J$, the vector fields $X_1^j,\dots,X_m^j$ and their commutators up to order $s(K)$ computed at $x$ span the whole $\R^n$.
	
\begin{theorem}\label{uniformity}
For every compact set $K\subseteq \mathbb R^n$ there exist $J_0\in\N$ and $C_K>0$ such that for every $x,y\in K$  and $j\geq J_0$
\[
\begin{split}
&\frac1{C_K}|x-y|\leq d(x,y)\leq C_K|x-y|^{1/{s(K)}}\\
&\frac1{C_K}|x-y|\leq d_j(x,y)\leq C_K|x-y|^{1/s(K)}.
\end{split}
\]
\end{theorem}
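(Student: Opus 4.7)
The plan is to establish a uniform version of the Chow-H\"ormander condition for the families $X^j$ on $K$, and then to import the quantitative ball-box estimates of \cite{BBP,MM} with constants depending only on data that are equicontrolled in $j$.

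First I would make precise the observation stated just above the theorem. Fix a compact set $K$ and, for each $x\in K$, pick an $n$-tuple $(Y_1^x,\dots,Y_n^x)$ of iterated commutators of $X_1,\dots,X_m$ of length at most $s(K)$ which are linearly independent at $x$. The map $y\mapsto\det(Y_1^x(y),\dots,Y_n^x(y))$ is continuous and nonzero at $x$, hence bounded below on an open neighborhood $U_x$ of $x$; extracting a finite subcover of $K$ yields a finite family of candidate $n$-tuples of commutators, one per patch, whose determinants are uniformly bounded below. Since $X_i^j\to X_i$ in $C^\infty_{loc}(\R^n)$, the analogous commutators built out of $X^j$ converge uniformly on $K$ to those of $X$; thus for all $j\geq J_0$ large enough the same $n$-tuples built out of $X^j$ remain linearly independent with, say, half the original lower bound on $|\det|$. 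This yields a quantitatively uniform Chow-H\"ormander condition for $X^j$ on $K$.

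For the upper bounds $d(x,y),d_j(x,y)\leq C_K|x-y|^{1/s(K)}$ I would invoke the quantitative ball-box theorem \cite[Proposition 5.8]{BBP}, whose constant depends only on a $C^{s(K)}$-bound for the vector fields on a fixed Euclidean neighborhood of $K$ and on a positive lower bound for the determinants of spanning commutator $n$-tuples of length at most $s(K)$. Both are uniform in $j$ by the previous step and by $C^\infty_{loc}$-convergence, so a single $C_K$ works for $X$ and for every $X^j$ with $j\geq J_0$. The lower bounds $d(x,y),d_j(x,y)\geq|x-y|/C_K$ are softer: any $X^j$-subunit path $\gamma$ from $x$ to $y$ on $[0,T]$ satisfies $|\dot\gamma(t)|\leq\sqrt m\max_i\|X_i^j\|_{L^\infty(K')}$ on a fixed Euclidean compact set $K'$ (which contains $\gamma$ by the assumed boundedness of metric balls combined with a Gronwall-type estimate based on the uniform $C^1$-bounds on $X^j$); integrating and taking the infimum over $T$ gives $|x-y|\leq C\,d_j(x,y)$ with $C$ independent of $j\geq J_0$, and identically for $d$.

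The main obstacle is the book-keeping needed to certify that the constants produced by \cite[Proposition 5.8]{BBP} and \cite[Claim 3.3]{BBP} really depend only on the two pieces of data listed above (the $C^{s(K)}$-norm bound and the lower bound on a selected family of commutator determinants), and not on further implicit features of the particular family $X$. Once this dependence is read off from the proofs in \cite{BBP,MM}, the uniformity in $j\geq J_0$ is automatic and combining the two estimates yields the theorem.
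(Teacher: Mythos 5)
Your argument follows exactly the route the paper itself sketches (no detailed proof is given there): establish a uniform Chow-H\"ormander condition for $X^j$ on $K$ via $C^\infty_{loc}$-convergence and a compactness argument on commutator determinants, then import the quantitative ball-box estimates of \cite[Proposition 5.8, Claim 3.3]{BBP} whose constants depend only on equicontrolled data, and obtain the lower bound by the elementary Lipschitz estimate for subunit curves confined to a fixed Euclidean compact (as in Lemma \ref{welldefined}). This matches the paper's intended proof; the only work left is the book-keeping on the constant dependence in \cite{BBP,MM}, which you correctly flag as the residual task.
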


We aim at proving that the sequence of distances $d_j$ converges to $d$ locally uniformly; we need some preparatory lemmata.
	
\begin{lemma}\label{welldefined}
	Let $K$ be a compact set in $\R^n$. Then for every $T>0$, there exist $J_1=J_1(K,T)\in \mathbb N$ and $R=R(K,T)>0$  such that for every $x\in K$, $h\in L^\infty([0,T];\R^m)$ with $\|h\|\leq 1$ and any $j\geq J_1$ the following hold:
	\begin{itemize}
	\item[(a)] the curve $\gamma_{h,x}^j$ is well defined on $[0,T]$;
	\item[(b)] $\gamma_{h,x}^j([0,T])\subseteq B_e(0,R)$.
	\end{itemize}
\end{lemma}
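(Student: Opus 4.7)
The plan is to combine the a priori Euclidean boundedness of $d$-balls with a Gronwall-type stability estimate for the approximating ODEs, exploiting the $C^\infty_{loc}$ convergence $X_i^j\to X_i$.

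First I would observe that any curve $\gamma_{h,x}$ with $\|h\|_{L^\infty}\leq 1$ is $X$-subunit, hence $d(\gamma_{h,x}(t),x)\leq t\leq T$ for every $t\in[0,T]$. Since $d$ and the Euclidean distance induce the same topology on $\R^n$, the function $d$ is continuous on $\R^n\times\R^n$ and the compact set $K$ has finite $d$-diameter; fixing any $x_0\in K$, all limit curves lie in $\overline{B(x_0,T+\diam_d K)}$, which is Euclidean-bounded by the standing hypothesis on $d$-balls. This produces a radius $R_0=R_0(K,T)$ such that $\gamma_{h,x}([0,T])\subseteq B_e(0,R_0)$ uniformly in $x\in K$ and $\|h\|_\infty\leq 1$. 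I would then set $R:=R_0+1$ and $K':=\overline{B_e(0,R)}$, and extract from the $C^1_{loc}$ convergence a Lipschitz constant $L$ and a sup bound $M$ on $K'$ valid for every $X_i^j$ with $j$ large, together with $\varepsilon_j:=\max_i\|X_i-X_i^j\|_{L^\infty(K')}\to 0$.

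Next, I would run a Gronwall comparison between $\gamma_{h,x}^j$ and $\gamma_{h,x}$ on the largest interval $[0,\tau^*)$ on which $\gamma_{h,x}^j$ exists and remains inside $K'$ (local well-posedness on a small initial interval is granted by smoothness of $X_i^j$). Splitting
\[
|X_i(\gamma_{h,x}(t))-X_i^j(\gamma_{h,x}^j(t))|\leq L|\gamma_{h,x}(t)-\gamma_{h,x}^j(t)|+\varepsilon_j
\]
and using $\|h\|_\infty\leq 1$, the quantity $\phi(t):=|\gamma_{h,x}^j(t)-\gamma_{h,x}(t)|$ satisfies a differential inequality $\phi'\leq C\phi+C\varepsilon_j$ with $C$ depending only on $m$ and $L$, so Gronwall yields $\phi(t)\leq C'\varepsilon_j$ on $[0,\tau^*)$ with $C'=C'(K,T)$.

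Choosing $J_1$ so large that $C'\varepsilon_j<1/2$ for every $j\geq J_1$, the inclusion $\gamma_{h,x}^j(t)\in B_e(\gamma_{h,x}(t),1/2)\subseteq B_e(0,R_0+1/2)$ places $\gamma_{h,x}^j(t)$ strictly inside $K'$, so the standard continuation argument for smooth ODEs excludes $\tau^*<T$ and yields simultaneously (a) and (b) with this choice of $R$. The main obstacle is simply uniformity in $(x,h)$, which is the reason we build a single compact $K'$ at the outset: all the constants $L$, $M$, $\varepsilon_j$ are then intrinsic to $K'$ and independent of the particular initial datum and control.
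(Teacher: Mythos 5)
Your proposal is correct and follows essentially the same route as the paper's proof: both use the standing Euclidean-boundedness of CC balls to confine the limit curves $\gamma_{h,x}$ to a fixed Euclidean ball, then enlarge it by a definite margin, run Gr\"onwall's comparison between $\gamma_{h,x}^j$ and $\gamma_{h,x}$ on the maximal interval of confinement, and close with a continuation/contradiction argument once $j$ is large enough to make the Gr\"onwall error smaller than the margin.
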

\begin{proof}
	Define first 
	\[
	K':=\{\gamma_{h,x}(T):x\in K, h\in L^\infty([0,T];\R^m), \|h\|\leq 1\}=\bigcup_{x\in K} \overline{B(x,T)}.
	\]
	Since metric balls are bounded, also $K'$ is bounded. We can therefore find $R>0$ such that $K'\subseteq B_e(0,R)$ and $d_e(K',\R^n\setminus B_e(0,R))>1$. Choose $J_1\in \mathbb N$  such that for every $j\geq J_1$
	\[
	T\left(\sum_{i=1}^m\sup_{B_e(0,R)}|X_i^j-X_i|\right)e^{mCT}\leq \frac{1}{2},
	\]
	where $C>0$ will be determined later. Let $h\in L^\infty([0,T];\R^m)$ and $j\geq J_1$ be fixed; define
	\[
	t_j:=\sup\{t>0: \gamma_{h,x}^j \text{ is well-defined on } [0,t] \text{ and } \gamma_{h,x}^j([0,t])\subseteq B_e(0,R)\}
	\]
	and suppose by contradiction that $t_j<T$. Then $\gamma_{h,x}^j(t_j) \in \partial B_e(0,R)$ and for every $\tau<t_j$ one has
\[
	\begin{aligned}
	\left|\gamma^j_{h,x}(\tau)-\gamma_{h,x}(\tau)\right|&\leq \int_0^\tau\sum_{i=1}^m\left|h_i(s)X^j_i(\gamma^j_{h,x}(s))- h_i(s)X_i(\gamma_{h,x}(s)) 	\right|ds\\
	&\leq\int_0^\tau\sum_{i=1}^m\left|X_i^j(\gamma^j_{h,x}(s))-X_i^j(\gamma_{h,x}(s))\right|ds \\&\hphantom{\leq}+\int_0^\tau\sum_{i=1}^m\left|X_i^j(\gamma_{h,x}(s))-X_i(\gamma_{h,x}(s))\right|ds.
	\end{aligned}
\]
	Notice that, since $X^j_i$ is converging to $X_i$ locally in $C^1$, and since $\gamma^j_{h,x}(s), \gamma_{h,x}(s)\in B_e(0,R)$, the Lipschitz constants
	\[
	c_i^{\,j}:=\sup_{x,y \in B_e(0,R)}\frac{|X_i^j(x)-X_i^j(y)|}{|x-y|}
	\]
are converging to the Lipschitz constant  $ c_i:=\sup_{x,y \in B_e(0,R)}\frac{|X_i(x)-X_i(y)|}{|x-y|}$. Therefore there exists $C>0$ such that $c_i^{\,j}, c_i\leq C$ for any $j\in \mathbb N $ and $i=1,\dots, m$, which gives
\[
	\left|\gamma^j_{h,x}(\tau)-\gamma_{h,x}(\tau)\right|\leq \int_0^\tau\left(mC \left|\gamma^j_{h,x}(s)-\gamma_{h,x}(s)\right| +\sum_{i=1}^m\sup_{B_e(0,R)}|X_i^j-X_i|\right)ds.
\]
	We can therefore apply Gr\"onwall's Lemma (see \cite{Gronwall}) to get
\[
	\left|\gamma^j_{h,x}(t_j)-\gamma_{h,x}(t_j)\right|\leq t_j\left(\sum_{i=1}^m\sup_{B_e(0,R)}|X^j_i-X_i|\right)e^{mCt_j}\leq \frac{1}{2}.
\]
Notice that $\gamma_{h,x}(t_j)\in K'$ and $\gamma_{h,x}^j(t_j) \in \partial B_e(0,R)$: this contradicts the definition of $R$, giving $t_j= T$. The lemma is proved. 
\end{proof}

\begin{lemma}\label{lemmapointwise}
	Fix $\varepsilon \in (0,1)$ and a compact set $K$ in $\R^n$. Then, for every $T>0$ there exists $J_2=J_2(K,T,\varepsilon) \in \mathbb N$ such that for every $x\in K$, $j\geq J_2$,   $h\in L^\infty([0,T];\R^m)$ with $\|h\|\leq 1$ and $t\in [0,T]$ one has
	\[
	|\gamma_{h,x}^j(t)-\gamma_{h,x}(t)|\leq \varepsilon
	\]
\end{lemma}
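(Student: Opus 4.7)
The plan is to observe that Lemma~\ref{welldefined} has essentially already done the work: its proof produces a Gr\"onwall-type estimate for $|\gamma_{h,x}^j - \gamma_{h,x}|$ on $[0,T]$, and the specific bound $1/2$ appearing there was only an artefact of the threshold needed to derive a contradiction with the definition of $t_j$. Here I will re-run the same computation but keep the right-hand side of the Gr\"onwall estimate as a quantity depending on $j$ which tends to $0$, and then choose $J_2$ accordingly.

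First, I would invoke Lemma~\ref{welldefined} to obtain $J_1 \in \N$ and $R>0$ such that for every $j \geq J_1$, every $x \in K$ and every $h \in L^\infty([0,T];\R^m)$ with $\|h\| \leq 1$, both curves $\gamma_{h,x}$ and $\gamma_{h,x}^j$ are well defined on $[0,T]$ with image contained in $B_e(0,R)$. (Notice that the analogous statement for $\gamma_{h,x}$ is built into the definition of $K'$ from that proof.) Then, exactly as in the displayed chain of inequalities in the proof of Lemma~\ref{welldefined}, splitting
\[
|h_i(s)X_i^j(\gamma_{h,x}^j(s))-h_i(s)X_i(\gamma_{h,x}(s))| \leq |X_i^j(\gamma_{h,x}^j(s))-X_i^j(\gamma_{h,x}(s))| + |X_i^j(\gamma_{h,x}(s))-X_i(\gamma_{h,x}(s))|,
\]
and using the uniform Lipschitz bound $C$ on $B_e(0,R)$ that was established in Lemma~\ref{welldefined} (which is independent of $j$ for $j$ large, since $c_i^{\,j}\to c_i$), I obtain for every $\tau \in [0,T]$
\[
|\gamma_{h,x}^j(\tau) - \gamma_{h,x}(\tau)| \leq \int_0^\tau \bigg( mC\,|\gamma_{h,x}^j(s) - \gamma_{h,x}(s)| + \sum_{i=1}^m \sup_{B_e(0,R)} |X_i^j - X_i| \bigg)\, ds.
\]

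Gr\"onwall's Lemma then yields the uniform bound
\[
|\gamma_{h,x}^j(\tau) - \gamma_{h,x}(\tau)| \leq T \bigg( \sum_{i=1}^m \sup_{B_e(0,R)} |X_i^j - X_i| \bigg) e^{mCT},
\]
valid for every $\tau \in [0,T]$, every $x \in K$ and every admissible $h$. Since $X_i^j \to X_i$ in $C^\infty_{loc}(\R^n)$, the factor $\sum_{i=1}^m \sup_{B_e(0,R)}|X_i^j - X_i|$ tends to $0$ as $j\to\infty$, so I can pick $J_2 \geq J_1$ large enough that the right-hand side is at most $\varepsilon$ for every $j \geq J_2$, which concludes the argument. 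There is no genuine obstacle here, as the estimate is already essentially contained in Lemma~\ref{welldefined}; the only point to be careful about is that the radius $R$ and the Lipschitz constant $C$ can be fixed before $\varepsilon$ enters the picture, which is guaranteed by the previous lemma.
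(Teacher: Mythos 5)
Your proposal is correct and follows the same route as the paper: invoke Lemma~\ref{welldefined} to fix $R$ and the Lipschitz constant $C$ uniformly, re-run the Gr\"onwall estimate to get $|\gamma_{h,x}^j(t)-\gamma_{h,x}(t)|\leq T\bigl(\sum_i\sup_{B_e(0,R)}|X_i^j-X_i|\bigr)e^{mCT}$, and choose $J_2\geq J_1$ so the right-hand side is below $\varepsilon$. This matches the paper's argument essentially verbatim.
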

\begin{proof}
Let $J_1=J_1(K,T)$ and $R=R(K,T)$ be given by Lemma \ref{welldefined} and let $C>0$ be the constant appearing in its proof. We can reason as in Lemma \ref{welldefined} above and use Gr\"onwall's Lemma to get, for any $x,j,h,t$ as in the statement, that
\[
	\left|\gamma^j_{h,x}(t)-\gamma_{h,x}(t)\right|\leq t\left(\sum_{i=1}^m\sup_{B_e(0,R)}|X^j_i-X_i|\right)e^{mCt}.
\]
The proof is then accomplished by  choosing $J_2\geq J_1$ sufficiently large to have
\[
T\left(\sum_{i=1}^m\sup_{B_e(0,R)}|X^j_i-X_i|\right)e^{mCT}<\varepsilon.
\]
\end{proof}

Clearly, $J_2$ can be chosen with the additional property that   $J_2(K,T_1,\varepsilon)\leq J_2(K,T_2,\varepsilon)$ whenever $0<T_1\leq T_2$.

\begin{theorem}\label{teo:convunifd_j}
Let $X=(X_1,\dots,X_m)$ and $X^j=(X_1^j,\dots,X_m^j)$, $j\in \mathbb{N}$, be $m$-tuples of linearly independent smooth vector fields on $\mathbb R^n$ such that  $X$ satisfies the Chow-H\"ormander condition and its CC balls are bounded in $\R^n$; assume that, for every $i=1,\dots,m$, $X^j_i\to X_i$ in $C^\infty_{loc}(\R^n)$ as $j\to\infty$. Then the sequence  $(d_j)_j$ converges to $d$ in $L^\infty_{loc}(\mathbb{R}^n\times \mathbb{R}^n)$ as $j\to +\infty $. %In particular, condition $(i)$ of Theorem \ref{structureconvergence} is satisfied. 
\end{theorem}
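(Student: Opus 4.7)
The plan is to prove locally uniform convergence of $d_j$ to $d$ by establishing the two one-sided inequalities
\[
\limsup_{j\to\infty}\sup_{x,y\in K}\bigl(d_j(x,y)-d(x,y)\bigr)\leq 0\qquad\text{and}\qquad \limsup_{j\to\infty}\sup_{x,y\in K}\bigl(d(x,y)-d_j(x,y)\bigr)\leq 0
\]
for every compact $K\subset\R^n$. As preparation I would invoke Theorem \ref{uniformity} on $K$ to obtain $J_0\in\N$ and $T_0>0$ (depending only on $K$) such that $d(x,y)\leq T_0$ and $d_j(x,y)\leq T_0$ for all $x,y\in K$ and $j\geq J_0$; then fix the enlarged compact $K':=\overline{B_e(K,1)}$ with its associated constants $C_{K'},s(K')$ from Theorem \ref{uniformity}. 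The resulting uniform time bound $T_0$ will allow me to apply Lemma \ref{lemmapointwise} with a single time horizon, valid across all pairs $(x,y)\in K\times K$ simultaneously.

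For the first inequality, fix $\varepsilon,\eta\in(0,1)$ and $x,y\in K$. Using that $X$-metric balls are bounded I would choose a subunit control $h\in L^\infty([0,T];\R^m)$ with $\|h\|_\infty\leq 1$, $T\leq d(x,y)+\eta\leq T_0+1$, and $\gamma_{h,x}(T)=y$. Lemma \ref{lemmapointwise} with time horizon $T_0+1$ yields $|\gamma^j_{h,x}(T)-y|\leq\varepsilon$ for $j\geq J_2(K,T_0+1,\varepsilon)$, so $\gamma^j_{h,x}(T)\in K'$. Since $\gamma^j_{h,x}$ is $X^j$-subunit, $d_j(x,\gamma^j_{h,x}(T))\leq T$, while Theorem \ref{uniformity} on $K'$ bounds $d_j(\gamma^j_{h,x}(T),y)\leq C_{K'}\varepsilon^{1/s(K')}$; the triangle inequality then gives
\[
d_j(x,y)\leq d(x,y)+\eta+C_{K'}\varepsilon^{1/s(K')},
\]
uniformly in $(x,y)\in K\times K$ since all thresholds depend only on $K$ and $\varepsilon$. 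Sending $\eta,\varepsilon\to 0^+$ would prove the first inequality.

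The reverse inequality I would handle by a symmetric argument. Given $x,y\in K$ and $\eta>0$, the infimum definition \eqref{eq:defdcc} applied to $d_j$ supplies a subunit (w.r.t.\ $X^j$) control $h^j\in L^\infty([0,T_j];\R^m)$ with $T_j\leq d_j(x,y)+\eta\leq T_0+1$ and $\gamma^j_{h^j,x}(T_j)=y$; no existence of $X^j$-geodesics is needed. Because the conclusion of Lemma \ref{lemmapointwise} is uniform in the control, applying the lemma with $h=h^j$ produces $|\gamma_{h^j,x}(T_j)-y|\leq\varepsilon$ for $j$ large, so $\gamma_{h^j,x}$ is an $X$-subunit path ending $\varepsilon$-close to $y$; Theorem \ref{uniformity} on $K'$ then closes the Euclidean gap and yields $d(x,y)\leq d_j(x,y)+\eta+C_{K'}\varepsilon^{1/s(K')}$. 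The main point requiring care, and the only real subtlety, is the \emph{uniformity} of the threshold $J_2$ in $(x,y)\in K\times K$; this is exactly what Theorem \ref{uniformity} guarantees via the single time bound $T_0$, so I do not anticipate any further obstacle.
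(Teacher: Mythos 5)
Your proposal is correct, and while the bound $d_j\le d+\varepsilon$ is obtained essentially as in the paper (near-geodesic for $X$, push through $X^j$ via Lemma \ref{lemmapointwise}, close the Euclidean gap with Theorem \ref{uniformity}), your treatment of the reverse bound $d\le d_j+\varepsilon$ is genuinely different from the paper's. The paper proves this direction only pointwise: for each fixed pair $(x,y)$ it takes $X^j$-geodesic controls $h^j$, extracts a weak-$*$ convergent subsequence in $L^\infty$, uses Ascoli--Arzel\`a on the curves $\gamma^{j_\ell}$, and passes to the limit in the ODE to produce an $X$-subunit competitor; it then upgrades the pointwise statement to a locally uniform one via a finite $\varepsilon$-net of $K$ combined with Theorem \ref{uniformity}. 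You instead observe that Lemma \ref{lemmapointwise} holds \emph{uniformly in the control $h$}, which lets you feed the $j$-dependent near-optimal control $h^j$ for $d_j$ directly into the lemma and read off $|\gamma_{h^j,x}(T_j)-y|\le\varepsilon$ with a threshold depending only on $K$, $T_0$, $\varepsilon$. Since $\gamma_{h^j,x}$ is $X$-subunit, the triangle inequality immediately gives the uniform estimate, with no weak-$*$ compactness, no Ascoli--Arzel\`a, and no covering argument. This mirrors the first inequality exactly and is shorter; the only housekeeping worth recording explicitly (extend $h^j$ by zero to $[0,T_0+1]$ so Lemma \ref{lemmapointwise} applies on a fixed time horizon, and note that $\gamma_{h^j,x}(T_j)\in K'$ so Theorem \ref{uniformity} on $K'$ is usable) is straightforward. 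What the paper's compactness route buys in exchange is a byproduct you do not get from your argument, namely the existence of a limiting $X$-subunit curve attaining the bound; but for the purpose of Theorem \ref{teo:convunifd_j} that extra information is not needed, so your route is the leaner one.
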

\begin{proof}
Let $K\subset\R^n$ be a fixed compact set.

	We first prove that for every $\varepsilon \in (0,1)$ there exists $J_3=J_3(K,\varepsilon)\in \mathbb N$  such that for every $x,y\in K$ and $j\geq J_3$ one has
	\[
	d_j(x,y)\leq d(x,y)+\varepsilon.
	\]
Consider $x,y\in K$; by the existence of geodesics, there exists $h\in L^\infty([0,1];\mathbb{R}^m)$ such that $\|h\|_{L^\infty}= d(x,y)$ and $\gamma_{h,x}(1)=y$. We set $y_j:=\gamma_{h,x}^j(1)$ and consider $J_0$ and $C_K>0$ as given by Theorem \ref{uniformity}. For $j\geq J_3:=\max\{J_0,J_2(K,\mathrm{diam}_d K,(\varepsilon/C_K)^{s(K)})\}$ we have
	\[
	|y_j-y|=|\gamma_{h,x}^j(1)-\gamma_{h,x}(1)|\leq \left(\frac{\varepsilon}{C_K}\right)^{s(K)}.
	\]
By Theorem \ref{uniformity} we deduce that $d_j(y_j,y)\leq \varepsilon$; in particular, for any $j\geq J_3$ one has
	\begin{equation}\label{limsup1}
	d_j(x,y)\leq d_j(x,y_j)+d_j(y_j,y)\leq d(x,y)+\varepsilon,
	\end{equation}
as claimed. Notice also that $ \sup_{j\geq J_3}\text{diam}_{d_j} K\leq \mathrm{diam}_d K + 1=:L$ is finite.
	
		We now prove that for any $x, y\in K$ and $\varepsilon \in (0,1)$ there exists $J_4=J_4(K,x,y,\varepsilon)\in \mathbb N$ such that for every $j\geq J_4$ 
	\begin{equation}\label{eq:canestro}
		d(x,y)\leq d_j(x,y)+\varepsilon.
	\end{equation}
For every $j\geq J_3$ let $h^j\in L^\infty([0,1];\mathbb{R}^m)$ be such that
	\[
	\gamma_{h^j,x}^j(1)=y\qquad\text{and}\qquad 
	\|h^j\|_{L^\infty}= d_j(x,y)\leq L.
	\]
The sequence $(h^j)_j$ is bounded in $L^\infty$ and therefore there exists a subsequence $(h^{j_\ell})_\ell$ and $h\in L^\infty([0,1];\R^m)$ such that 
	\[
	h^{j_\ell}\stackrel{*}{\rightharpoonup}h\text{ in }L^\infty\qquad\text{and}\qquad
	\lim_{\ell\to\infty} \|h^{j_\ell}\|_{L^\infty}=\liminf_{j\to\infty}\|h^{j}\|_{L^\infty}=\liminf_{j\to\infty} d_j(x,y).
	\]
	Denoting $\gamma^{j_\ell}:=\gamma_{h^{j_\ell},x}^{j_\ell}$ and considering $R=R(K,L)>0$ as given by Lemma \ref{welldefined}, one has $\gamma^{j_\ell}([0,1])\subseteq B_e(0,R)$. Since $X^j_i$ are converging to $X_i$ uniformly in $C^\infty$ ($i=1,\dots, m$), such vector fields are equibounded on $B_e(0,R)$. By Ascoli-Arzel\`a Theorem, up to a further subsequence, there exists a curve $\gamma\in AC([0,1],\R^n)$ such that $\gamma^{j_\ell}$ uniformly converges to $\gamma$ in $[0,1]$ as $\ell\to \infty $. For every $t\in [0,1]$ one has 
		\[
		\gamma^{j_\ell}(t)=x+\int_0^t\sum_{i=1}^m h_i^{j_\ell}(s) X_i^{j_\ell}(\gamma^ {j_\ell}(s))ds
		\]
and, taking into account that $X_i^{j_\ell}\circ\gamma^{j_\ell}\to X_i\circ \gamma$ uniformly in $[0,1]$ and that $h^j\stackrel{*}{\rightharpoonup}h$ in $L^\infty$, by letting $\ell\to\infty$ one gets
	\[
	\gamma(t)=x+\int_0^t\sum_{i=1}^m h_i(s)X_i(\gamma(s))ds.
	\]
In particular $\gamma=\gamma_{h,x}$, $\gamma(1)=y$ and 
\[
d(x,y)\leq \|h\|_{L^\infty}\leq \liminf_{\ell\to\infty} \|h_{j_\ell}\|_{L^\infty}=\liminf_{j\to\infty} d_j(x,y),
\]
which proves \eqref{eq:canestro}.

By the compactness of $K$ we can find $x_1,\dots, x_k\in K$ such that $K\subseteq \bigcup_{\ell=1}^k B(x_\ell, \varepsilon)$. Using Theorem \ref{uniformity} and \eqref{eq:canestro} we can find $\widetilde C=\widetilde C(K)>0$ and $J_5=J_5(K,\varepsilon)\in \mathbb N $ such that for  $j\geq J_5$
	\[
	\begin{aligned}
		&B(x_\ell, \varepsilon)\subseteq  B^j(x_\ell, \widetilde C\varepsilon^{1/{s(K)}})\qquad&&\forall\;\ell=1,\dots,k\\
		& d(x_{\ell_1},x_{\ell_2})\leq d_j(x_{\ell_1},x_{\ell_2}) +\varepsilon\qquad&&\forall\;\ell_1,\ell_2=1,\dots, k.
	\end{aligned}
	\]
For  every $x,y \in K$ we can find $x_{\ell_1}, x_{\ell_2}\in K$ (with $1\leq \ell_1, \ell_2\leq k$) such that $x\in B(x_{\ell_1},\varepsilon)$ and $y\in B(x_{\ell_2},\varepsilon)$, hence for  $j\geq J_5$ we have
	\[
	\begin{aligned}
	d(x,y)&\leq d(x, x_{\ell_1})+d(x_{\ell_1},x_{\ell_2})+d(y,x_{\ell_2})\\
	& \leq \varepsilon +d_j(x_{\ell_1},x_{\ell_2})+\varepsilon +\varepsilon \\ 
	& \leq d_j(x_{\ell_1},x)+d_j(x,y)+d_j(y, x_{\ell_2})+3\varepsilon \\
	& =d_j(x,y)+3\varepsilon+2\widetilde C\varepsilon^{1/{s(K)}},
	\end{aligned}
	\]
	which, combined with \eqref{limsup1}, concludes the proof.
\end{proof}

Let us recall that, given a CC space $(\mathbb R^n, X)$, a function  $u\in L^1_{loc}(\Omega)$ is said to have {\em locally bounded $X$-variation}  if the distributional derivatives $X_1u,\dots, X_mu$ are represented by Radon measures. See e.g. \cite{capdangar,FSSC}. We denote by $BV_{X,loc}(\R^n)$ the set of functions of locally bounded $X$-variation in $\R^n$ and by $|D_Xu|$ the total variation of the vector-valued measure $D_Xu:=(X_1u,\dots, X_mu)$.

Sobolev- and Poincar\'e-type inequalities in CC spaces have been largely investigated; among the vast literature we mention only \cite{Jerison,GN,hajkos}. The following result is an easy consequence of \cite[Theorem 7.2]{BBP} or \cite[Theorem 1.1]{MM}. Notice that the latter results are proved  only when $u$ is a smooth function on $\R^n$; in order to prove Theorem \ref{teo:poincare} as  stated here one has to approximate  functions in $BV_{X,loc}$ by smooth ones (see \cite{FSSC,GN}).
	
\begin{theorem}\label{teo:poincare}
Let $X=(X_1,\dots,X_m)$ and $X^j=(X_1^j,\dots,X_m^j)$, $j\in \mathbb{N}$, be $m$-tuples of linearly independent smooth vector fields on $\mathbb R^n$ such that  $X$ satisfies the Chow-H\"ormander condition and its CC balls are bounded in $\R^n$; assume that, for every $i=1,\dots,m$, $X^j_i\to X_i$ in $C^\infty_{loc}(\R^n)$ as $j\to\infty$.  Then, for every compact set $K\subseteq \mathbb{R}^n$ there exist $C_P>1$, $\alpha\geq 1,\ R_P>0$ and $J\in\N$ such that for every $j\geq J$, $u\in BV_{X^j,loc}(\R^n)$,  $x\in K$ and $r\in (0,R_P)$ one has
	\begin{equation}\label{eq:poincare}
	\int_{B^j} \left|u-u(B^j)\right|d\mathscr{L}^n\leq C_P\,r\:|D_{X^j}u|(\alpha B^j),
	\end{equation}
where $B^j:=B^j(x,r)$ and $ u(B^j)=\fint_{B^j} u\,d\mathscr{L}^n$.
\end{theorem}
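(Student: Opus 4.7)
The plan is to reduce the statement to the already-known smooth-function version of the Poincaré inequality in \cite{BBP,MM}, once we have checked that the constants provided there are uniform in $j$ for $j$ sufficiently large, and then to obtain the $BV_{X^j}$ statement by a standard smooth approximation.

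\medskip

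\textbf{Step 1: uniform Poincaré for smooth functions.} I would first invoke \cite[Theorem 7.2]{BBP} (equivalently \cite[Theorem 1.1]{MM}): given a family $Y=(Y_1,\dots,Y_m)$ of smooth, linearly independent vector fields satisfying the Chow--H\"ormander condition on a neighborhood of $K$, there exist $C_P>1$, $\alpha\geq 1$ and $R_P>0$ such that for every smooth $u$, every $x\in K$ and every $r\in(0,R_P)$
\[
\int_{B^Y(x,r)} |u-u(B^Y(x,r))|\,d\mathscr L^n\leq C_P\,r\int_{B^Y(x,\alpha r)} |Yu|\,d\mathscr L^n,
\]
where $|Yu|=(\sum_i|Y_iu|^2)^{1/2}$. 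The key observation (the same one that the authors highlight before Theorem \ref{uniformity}) is that the constants produced in the proofs of \cite{BBP,MM} depend on $Y$ only through: the integer $s(K)$; finitely many sup-norms on a compact neighborhood of $K$ of the $Y_i$ and their iterated commutators up to order $s(K)$; and the lower bound on the norms of the ``best'' collections of such commutators realizing a maximal minor. Since $X^j\to X$ in $C^\infty_{loc}(\R^n)$, for $j$ large each of these quantities computed for $X^j$ stays in a fixed compact range of the corresponding quantity for $X$; in particular, the observation recalled before Theorem \ref{uniformity} yields an index $J$ from which on the commutators of $X^j$ of order $\leq s(K)$ span $\R^n$ at each point of $K$ with a uniform lower bound on the relevant determinants. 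Hence for $j\geq J$ the triple $(C_P,\alpha,R_P)$ produced by \cite[Theorem 7.2]{BBP} can be chosen independent of $j$, giving \eqref{eq:poincare} for every \emph{smooth} $u$.

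\medskip

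\textbf{Step 2: approximation by smooth functions.} Fix $j\geq J$ and $u\in BV_{X^j,loc}(\R^n)$. By the Meyers--Serrin type density result for BV functions in CC spaces (see \cite{FSSC,GN}), for any open set $\Omega'\Subset\R^n$ containing $\overline{\alpha B^j(x,r)}$ there exists a sequence $(u_k)_k\subset C^\infty(\Omega')$ with $u_k\to u$ in $L^1(\Omega')$ and
\[
\lim_{k\to\infty}\int_{\Omega'} |X^j u_k|\,d\mathscr L^n=|D_{X^j}u|(\Omega').
\]
Choosing $\Omega'=\alpha B^j(x,r)$ (or slightly enlarging it and using outer regularity of $|D_{X^j}u|$), Step~1 applied to $u_k$ gives
\[
\int_{B^j} |u_k-u_k(B^j)|\,d\mathscr L^n\leq C_P\,r\int_{\alpha B^j}|X^j u_k|\,d\mathscr L^n.
\]
Passing to the limit $k\to\infty$: since $u_k\to u$ in $L^1(B^j)$, the mean values $u_k(B^j)$ converge to $u(B^j)$ and the left-hand side converges to $\int_{B^j}|u-u(B^j)|\,d\mathscr L^n$; the right-hand side converges to $C_P\,r\,|D_{X^j}u|(\alpha B^j)$ by the choice of the approximating sequence. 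This yields \eqref{eq:poincare} for arbitrary $u\in BV_{X^j,loc}$.

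\medskip

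\textbf{Main obstacle.} The only nontrivial point is making the uniformity in $j$ in Step 1 rigorous: one has to trace through the argument of \cite{BBP} (or \cite{MM}) and verify that each constant in the proof is a continuous functional of the vector fields with respect to the $C^\infty$ topology on a compact neighborhood of $K$, so that the $C^\infty_{loc}$ convergence $X^j\to X$ translates into uniform bounds on the Poincaré constants. The cited ``Proposition 5.8 and Claim 3.3 of \cite{BBP}'' are precisely the quantitative tools that make this uniformity possible, so the argument reduces to the bookkeeping of constants rather than to a new estimate. The approximation step is then standard.
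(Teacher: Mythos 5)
Your proposal is correct and follows essentially the same route as the paper, which proves Theorem \ref{teo:poincare} exactly by invoking \cite[Theorem 7.2]{BBP} (or \cite[Theorem 1.1]{MM}) together with the uniformity-in-$j$ observation recorded before Theorem \ref{uniformity} (via \cite[Proposition 5.8 and Claim 3.3]{BBP}), and then approximating $BV_{X^j,loc}$ functions by smooth ones using the Meyers--Serrin type results of \cite{FSSC,GN}. Your two steps reproduce this argument, including the handling of the limit of the mean values and of the total variations in the approximation step.
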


We can then state our main application. See \cite[Section 8]{hajkos} for more references about compactness results for Sobolev or BV functions in CC spaces.

\begin{theorem}\label{teo:applicazione}
Let $X=(X_1,\dots,X_m)$ and $X^j=(X_1^j,\dots,X_m^j)$, $j\in \mathbb{N}$, be $m$-tuples of linearly independent smooth vector fields on $\mathbb R^n$ such that  $X$ satisfies the Chow-H\"ormander condition and its CC balls are bounded in $\R^n$; assume that, for every $i=1,\dots,m$, $X^j_i\to X_i$ in $C^\infty_{loc}(\R^n)$ as $j\to\infty$.  Let $u_j\in BV_{X^j,loc}(\R^n)$ be a sequence of functions that is locally uniformly  bounded in $BV$, i.e., such that for any compact set $K\subset\R^n$ there exists $M>0$ such that
\[
\forall j\in\N\qquad \|u_j\|_{L^1(K)} + |D_{X^j}u_j|(K)\leq M<\infty.
\]
Then, there exist $u\in BV_{X,loc}(\R^n)$ and a subsequence $(u_{j_h})_h$ of $(u_j)_j$ such that $u_{j_h}\to u$ in $L^1_{loc}(\R^n)$ as $h\to\infty$. Moreover, for any bounded open set $\Omega\subset\R^n$ the semicontinuity of the total variation
\[
|D_Xu|(\Omega) \leq \liminf_{j\to\infty} |D_{X^j} u_j|(\Omega)
\]
holds.
\end{theorem}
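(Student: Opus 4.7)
The plan is to deduce Theorem \ref{teo:applicazione} directly from Theorem \ref{structureconvergence}, and then to handle the semicontinuity of the total variation by a soft distributional argument.

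First, I would set $q=1$, $\delta=1$, $\lambda=\mathscr{L}^n$, and take $\mu_j:=|D_{X^j}u_j|$; let $d_j,d$ be the CC distances of $X^j,X$ respectively. I need to verify the four hypotheses (i)--(iv) of Theorem \ref{structureconvergence}. Hypothesis (i) is precisely Theorem \ref{teo:convunifd_j}. For (ii), local compactness and separability of $(\R^n,d)$ follow from the fact that $d$ and $d_e$ induce the same topology; the local doubling property of $(\R^n,d,\mathscr L^n)$ is a standard consequence of the Nagel--Stein--Wainger ball-box estimates. Hypothesis (iii) (with $\delta=1$) is exactly Theorem \ref{teo:poincare}, noting that for $j$ less than the threshold $J$ appearing in that statement one can incorporate the finitely many remaining indices into the constant $C_P$. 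Hypothesis (iv) is the local uniform $BV$ bound assumed in Theorem \ref{teo:applicazione}. An application of Theorem \ref{structureconvergence} then yields a function $u\in L^1_{loc}(\R^n)$ and a subsequence (still denoted $u_j$) with $u_j\to u$ in $L^1_{loc}(\R^n)$.

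Next I would prove the semicontinuity inequality, which will in particular show $u\in BV_{X,loc}(\R^n)$. Let $\Omega\subset\R^n$ be bounded and open and fix a test field $\varphi=(\varphi_1,\dots,\varphi_m)\in C^1_c(\Omega;\R^m)$ with $|\varphi|\leq 1$. Writing $X_i=\sum_k a_{ik}\partial_k$, integration by parts gives the formal adjoint
\[
\langle X_iw,\varphi_i\rangle=-\int_\Omega w\,\bigl(X_i\varphi_i+(\dive X_i)\varphi_i\bigr)\,d\mathscr{L}^n
\]
for any $w\in L^1_{loc}$, and the analogous identity for $X_i^j$. Since $X_i^j\to X_i$ in $C^\infty_{loc}(\R^n)$, the coefficients of $X_i^j$ and $\dive X_i^j$ converge uniformly on $\overline\Omega$ to those of $X_i$ and $\dive X_i$; combined with $u_j\to u$ in $L^1(\Omega)$, this gives
\[
-\int_\Omega u\,\sum_{i=1}^m\bigl(X_i\varphi_i+(\dive X_i)\varphi_i\bigr)\,d\mathscr{L}^n=\lim_{j\to\infty}\sum_{i=1}^m\langle X_i^j u_j,\varphi_i\rangle\leq\liminf_{j\to\infty}|D_{X^j}u_j|(\Omega).
\]
Taking the supremum over admissible $\varphi$ yields $|D_Xu|(\Omega)\leq\liminf_j|D_{X^j}u_j|(\Omega)$. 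Applied to bounded open neighborhoods of compact sets, together with hypothesis on $u_j$, this bound shows $u\in BV_{X,loc}(\R^n)$.

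The only genuinely non-routine issue is checking hypothesis (iii) for Theorem \ref{structureconvergence} uniformly over $j$; but this is exactly what Theorem \ref{teo:poincare} provides (with the finitely many small-$j$ indices absorbed into the constant). Apart from that bookkeeping, the local doubling assertion for $(\R^n,d,\mathscr L^n)$ on compact sets and the adjoint computation needed for semicontinuity are both standard.
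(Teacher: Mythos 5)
Your proposal is correct and follows essentially the same route as the paper: apply Theorem \ref{structureconvergence} with $q=\delta=1$, $\lambda=\mathscr L^n$, $\mu_j=|D_{X^j}u_j|$, verifying (i) via Theorem \ref{teo:convunifd_j}, (ii) via the NSW doubling estimates, (iii) via Theorem \ref{teo:poincare}, and (iv) by hypothesis, then pass to the limit in the distributional derivatives to establish membership in $BV_{X,loc}$ and semicontinuity. The one place you are more careful than the paper is the semicontinuity step: the paper's displayed computation tests with a single scalar $\varphi\in C^1_c(\Omega)$ against a fixed $X_i$, which literally yields $|X_iu|(\Omega)\le\liminf_j|D_{X^j}u_j|(\Omega)$ for each $i$ separately (and hence the total variation bound only up to a factor $m$), whereas you test against a vector field $\varphi\in C^1_c(\Omega;\R^m)$ with $|\varphi|\le1$ and take the supremum, which gives the semicontinuity inequality with the sharp constant exactly as stated. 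Your remark about absorbing the finitely many indices $j<J$ from Theorem \ref{teo:poincare} into the Poincar\'e constant is a small but worthwhile bookkeeping observation that the paper leaves implicit.
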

\begin{proof}
We use Theorem \ref{structureconvergence} with $X=\R^n$, $\lambda=\mathscr{L}^n$, $\delta=q=1$, $\mu_j:=|D_{X^j}u|$ and $d, d_j$ the CC distances associated with $X,X^j$ respectively. Assumption (i) follows from Theorem \ref{teo:convunifd_j}, while the local doubling property (ii) of $d$ is a well-known fact (see e.g. \cite{NSW}). The validity of (iii) (with $\delta=q=1$) follows from Theorem \ref{teo:poincare}, while (iv) is satisfied by assumption.

Theorem \ref{structureconvergence} ensures that, up to subsequences, $u_j$ converges to some $u$ in $L^1_{loc}(\R^n)$; we need to show that $u\in BV_{X,loc}(\R^n)$. To this aim, for any $i=1,\dots,m$ we denote by $X^*_i$ the formal adjoint to $X_i$ and write
\[
X_i(x)=\sum_{k=1}^n a_{i,k}(x)\partial_k\qquad\text{and}\qquad X_i^j(x)=\sum_{k=1}^n a_{i,k}^j(x)\partial_k
\]
for suitable smooth functions $a_{i,k},a_{i,k}^j$. Then, for any bounded open set $\Omega\subset\R^n $ and any test function $\varphi\in C^1_c(\Omega)$ we have
\[
\begin{split}
 \int_\Omega u \,X_i^*\varphi\:d\mathscr L^n =&  \int_\Omega u\sum_{k=1}^n\partial_k(a_{i,k}\varphi)\:d\mathscr L^n
\ = \ \lim_{j\to\infty} \int_\Omega u_j\sum_{k=1}^n\partial_k(a_{i,k}^j\varphi)\:d\mathscr L^n\\
 = & -\lim_{j\to\infty} \int_\Omega \varphi\,dX_i^ju_j
\ \leq \ \|\varphi\|_{L^\infty(\Omega)} \liminf_{j\to\infty} |D_X^j u_j|(\Omega) <\infty.
\end{split}
\]
This proves that $u\in BV_{X,loc}(\R^n)$ as well as the semicontinuity of the total variation. The proof is accomplished.
\end{proof}

\begin{rmk}{\rm
We conjecture that, when the CC space $(\R^n,X)$ is {\em equiregular}, the convergence $u_{j_h}\to u$ in Theorem \ref{teo:applicazione} holds in $L^q_{loc}$ for any $q\in[1,\tfrac Q{Q-1}[$, where $Q$ is the Hausdorff dimension of $(\R^n,X)$. This would easily follow in case the Poincar\'e inequality \eqref{eq:poincare} could be strengthened to 
\[
\|u-u(B^j)\|_{L^q(B^j)} \leq C_P\, r^\delta \,|D_{X^j}u|(\alpha B^j)
\]
for some $\delta>0$ (arguably, $\delta=\tfrac Qq+1-Q$). The key point would be proving that the constant $C_P$ can be chosen independent of $j$ but, as far as we know, no investigation in this direction  has been attempted in the literature.
}\end{rmk}

\begin{rmk}{\rm
Theorems \ref{teo:convunifd_j},  \ref{teo:poincare} and \ref{teo:applicazione}  hold also under a slightly weaker assumption: it is indeed enough that, for any compact set $K\subset\R^n$, the convergence  $X_i^j\to X_i$ holds in $C^k(K)$ for a suitable $k=k(K)$ (actually, $k$ depends only on $s(K)$) that one could  explicitly compute. See \cite{BBP,MM} for more details.
}\end{rmk}

\end{document}